\documentclass{article}
\usepackage{graphicx, amsmath, amsfonts, amsthm, amssymb, enumitem, svg, subcaption}
\usepackage[unicode = false,
    pdftoolbar = true,
    colorlinks = true,
    linkcolor = blue,
    citecolor = blue,
    filecolor = black,
    urlcolor = blue,
    breaklinks = true]{hyperref}
\usepackage{listings}
\usepackage{comment}
\usepackage{float}
\usepackage[affil-it]{authblk}
\usepackage{todonotes}
\usepackage[section]{placeins}
\usepackage{algpseudocodex}
\usepackage{hyperref}
\usepackage{tikz,pgfplots}

\pgfplotsset{
    compat=1.18,
    /pgf/declare function={
        ellipsex(\a,\b,\c,\x) = \a*cos(\x r)+\b*sin(\x r);
        ellipsey(\a,\b,\c,\x) = \b*cos(\x r)+\c*sin(\x r);
        A1_11 = 0.6035533905932738;
        A1_12 = -0.10355339059327376;
        A1_22 = 0.6035533905932738;
        A2_11 = 0.4743416490252569;
        A2_12 = -0.15811388300841897;
        A2_22 = 1.1067971810589328;
        B0_11 = 0.36223802824035045;
        B0_12 = -0.04601026222351254;
        B0_22 = 0.3622380282403505;
        B1_11 = 0.4277309429819104;
        B1_12 = 0.01948265251804743;
        B1_22 = 0.4277309429819104;
        B2_11 = 0.4369861898826181;
        B2_12 = -0.014816897114708388;
        B2_22 = 0.5555213668002852;
        B1alt_11 = 0.4643001008563161;
        B1alt_12 = -0.14807233483947824;
        B1alt_22 = 0.4643001008563161;
        B2alt_11 = 0.4673804287312365;
        B2alt_12 = -0.14382617849895876;
        B2alt_22 = 0.5906600103017725;
    },
}

\title{An algorithm to exactly compute minimal upper bounds in the Loewner order}

\author{Adam Humeniuk}
\affil{Department of Mathematics and Computing, Mount Royal University, Calgary, 
              Alberta, Canada \\ Humeniuk's research is funded by the Natural Sciences and Engineering Research Council (NSERC) of Canada, Discovery Grant DG2024-03883, DGECR2024-00522\\ 
              ORCID 0000-0001-8874-9312 \\
              Email: ahumeniuk@mtroyal.ca}

\author{Gabriel Jarry-Bolduc}
\affil{Department of Mathematics and Statistics, American University of Sharjah, 
              Sharjah, United Arab Emirates
              \\ORCID 0000-0002-1827-8508 \\
              Email: gabjarry@alumni.ubc.ca }

\author{Patrick Pascua}
\affil{Department of Mathematics and Computing, Mount Royal University, Calgary, 
              Alberta, Canada \\
              Email: patrick.pascua@mtroyal.ca}

\author{Nejaunie Williams}
\affil{Department of Mathematics and Computing, Mount Royal University, Calgary, 
              Alberta, Canada \\ Williams' research is funded by the NSERC Undergraduate Student Research Award (USRA), award number 104536.
              Email: nejaunie.williams@mtroyal.ca}

\makeatletter
\def\@maketitle{%
  \newpage
  \null
  \vskip 2em%
  \begin{center}%
  \let \footnote \thanks
    {\Large\bfseries \@title \par}%
    \vskip 1.5em%
    {\normalsize
      \lineskip .5em%
      \begin{tabular}[t]{c}%
        \@author
      \end{tabular}\par}%
    \vskip 1em%
    {\normalsize \@date}%
  \end{center}%
  \par
  \vskip 1.5em}
\makeatother

\newtheorem{master}{Master}[section]
\newtheorem{definition}[master]{Definition}
\newtheorem{theorem}[master]{Theorem}
\newtheorem{lemma}[master]{Lemma}

\newtheorem{proposition}[master]{Proposition}

\theoremstyle{definition}
\newtheorem{example}[master]{Example}
\newtheorem{remark}[master]{Remark}

\newtheoremstyle{algorithmstyle}
  {\topsep}
  {\topsep}
  {\normalfont}
  {}
  {}
  {}
  {0pt}
  {\noindent\textbf{#1\ \thmnumber{#2.}}\\}

\theoremstyle{algorithmstyle}
\newtheorem{algorithm}[master]{Algorithm}

\newtheoremstyle{pseudostyle} 
  {\topsep}
  {\topsep}
  {\normalfont}
  {}
  {}
  {}
  {0pt}
  {\noindent\textbf{#1\ \thmnumber{#2.}}\\}
  
\theoremstyle{pseudostyle}
\newtheorem{pseudocode}[master]{PseudoCode}

\DeclareMathOperator{\nullspace}{null}
\newcommand{\col}{\text{col}}
\newcommand{\spn}{\text{span}}

\newcommand{\vv}{\mathbf{v}}
\newcommand{\e}{\mathbf{e}}
\newcommand{\uu}{\mathbf{u}}
\newcommand{\x}{\mathbf{x}}
\newcommand{\y}{\mathbf{y}}
\newcommand{\sa}{\mathrm{h}} 
\newcommand{\zero}{\mathbf{0}}

\newcommand{\R}{\mathbb{R}}
\newcommand{\C}{\mathbb{C}}
\newcommand{\F}{\mathbb{F}}
\newcommand{\ph}{\phantom{-}}
\newcommand{\bbm}{\begin{bmatrix}}
\newcommand{\ebm}{\end{bmatrix}}
\newcommand{\Id}{\mathtt{I}}
\newcommand{\commonscale}{0.9}

\begin{document}
\maketitle

\noindent {\small 2020 \emph{Mathematics Subject Classification.} Primary 47A63. Secondary 15A39, 06F20.}

\begin{abstract}
The \emph{Loewner order} on \emph{Hermitian matrices} is a partial order that compares matrices in terms of  \emph{positive semidefiniteness}. The Loewner order plays a key role in many fields such as optimization, numerical linear algebra, control theory, operator theory, and quantum information. A fundamental difficulty is that two or more Hermitian matrices do not necessarily have a unique minimal upper bound (or maximal lower bound). In this paper, we propose an iterative method to exactly compute a minimal upper bound for any finite collection of $n\times n$ Hermitian matrices. It is shown that the algorithm terminates in at most $n$ iterations. The exactitude of the algorithm is proved using standard results from finite-dimensional linear algebra. A self-contained proof of an algebraic characterization of minimality originally explored by Stott is provided. We illustrate the algorithm in examples and also provide an implementation of the algorithm in Python.
\end{abstract}

\section{Introduction} \label{sec:introduction}

The notion of \emph{Hermitian matrices}  is a fundamental topic in mathematics and physics named after French mathematician Charles Hermite. One of the  most important properties of Hermitian matrices is that all eigenvalues  are real numbers.  As a consequence, the concept of \emph{positive (semi)definiteness} is well-defined for Hermitian matrices.  Their structure is exploited in different fields of applied mathematics such as \emph{semidefinite programming} \cite{vandenberghe1996semidefinite}, \emph{Cholesky factorization} \cite{GolubVanLoan2013,PAN1993103}, the \emph{conjugate gradient method} \cite{HesteneStiefel1952}, \emph{preconditioning} \cite{AshbyManteuffelSaylor1989},  and \emph{covariance matrices} \cite[Chapter 3]{Anderson2003} to name a few. 
To compare Hermitian matrices,  the \emph{Loewner order} may be used. A matrix $B$ is said to be greater than or equal to  a matrix $A$ if and only if  $B-A$ is \emph{positive semidefinite}. The Loewner order is a \emph{partial order} but not a \emph{total order} (see \cite{BAKSALARY1990171,BAKSALARY1989,BAKSALARY1990} for more details on matrix partial orderings).  This means that some pairs of matrices are not comparable. It was introduced in 1934 by Charles Loewner in  \cite{Loewner1934}. The paper investigates \emph{matrix monotone functions}--a type of matrix functions preserving  the Loewner order. We may think of matrix monotone functions and the Loewner order as a generalization of increasing  real-valued functions to the matrix  setting. Likewise, the corresponding notion of a \emph{matrix convex function} is equally important. For thorough treatments of these topics, we refer the reader to \cite[Chapter V]{bhatia2013matrix} and \cite{simon2019loewner}.

Note that not all authors refer to the Loewner order by Loewner's name. For example, the well-known optimization text \cite{boyd2004convex} simply refers to the \emph{generalized inequality} associated to the positive semidefinite cone, or even simply ``the usual matrix inequality".

Basic properties of the Loewner order are described in  \cite{bhatia2013matrix,bhimasankaram2010matrix}. The Loewner order is a partial order that is not total, and so one might hope that this order has lattice properties such as the existence of unique least upper bounds (or greatest lower bounds). Kadison \cite{kadison1951} proved that this is as far from being true as possible. Two matrices have a unique least upper bound (or greatest upper bound) if and only if they are comparable, which Kadison called the ``anti-lattice" property. (Moreover, Kadison proves that this is true even for bounded operators on any infinite dimensional Hilbert space. In fact, the Loewner order is closely connected to the study of operator algebras, and Sherman \cite{sherman1951order} showed that the lattice properties always fail in any noncommutative operator algebra.)

While finding a \emph{unique} minimum upper bound is usually impossible, there always exist \emph{minimal} upper bounds, which have the property that there is no strictly smaller upper bound in the Loewner order. This might be viewed as a matricial analogue of Pareto optimality, which appears frequently in vector optimization (cf. \cite[Section 4.7]{boyd2004convex}). In \cite[Theorem 1.15]{stott2017minimal}, Stott provides a geometric, kernel-based certificate for minimality in the Loewner order. Specifically, if a real symmetric matrix $B$ is an upper bound for the set of real $n\times n$ symmetric matrices $\{A_1,\ldots,A_k\}$, then $B$ is a minimal upper bound if and only if the associated null spaces satisfy
\[
\sum_{i=1}^k \nullspace(B-A_i) = \R^n.
\] 
That is, $B$ must agree with some $A_i$ on enough vectors to span the whole space. This also holds for complex Hermitian matrices on the space $\C^n$, and we provide a self-contained proof below in Proposition \ref{minimal condition}.

The main goal of this paper is to extend the work contained in \cite{stott2017minimal} by  developing an iterative method  that produces a minimal upper bound for a finite set of $k$ Hermitian matrices of any finite size $n\times n$. It provides a simple iterative method that outputs a minimal upper bound for a given non-empty finite set of  Hermitian  matrices using any initial upper bound matrix as an input, and the minimal upper bound obtained is guaranteed to be less than the initial upper bound matrix in the Loewner order. It is shown that the algorithm returns an exact minimal upper bound matrix with at most $n$ iterations.  A pseudo-code of the algorithm  is introduced (see Algorithm \ref{main_algorithm_code}) and the algorithm is implemented in Python (\href{https://github.com/patrickp78-byte/Minimal-Upper-Bound-Algorithm-in-Loewner-Ordering}{GitHub Repository}). The main contribution of this paper is to provide a simple computational method that returns an exact minimal upper bound for any (non-empty) finite set of finite-dimensional Hermitian matrices. Note that the algorithm can also be used to compute maximal lower bounds for a finite set of Hermitian matrices, by simply negating all matrices involved. Possible applications where the algorithm may be valuable  include  finding a good starting point  in semidefinite programming optimization problems, or identifying the \emph{outer Loewner-John ellipsoid} (see  \cite{henk2012lowner}). More potential applications are discussed in the conclusion. 

The remainder of this paper is organized as follows. In Section \ref{sec:preliminaries}, the notation and fundamental results necessary to understand  the main results of this paper are introduced.  In Section \ref{sec:main}, we give a self-contained proof of the minimality condition and then introduce a pseudo-code of an algorithm to find an exact minimal upper bound and prove its veracity. Then, we demonstrate the algorithm in exact examples. In Section \ref{sec:conclusion}, the main contributions of this paper are summarized, many possible applications in which the iterative method could be employed are proposed, and future research directions are suggested.

\section{Preliminaries} \label{sec:preliminaries}

Throughout this paper, the notation $\mathbb{F}$ denotes either the set of complex numbers  $\mathbb{C}$, or the set of real numbers  $\mathbb{R}$. A matrix is denoted with an upper case letter such as $A$ and a vector is denoted with a lower case bold face letter such as $\vv$. The transpose of a matrix $A$ is denoted by $A^\top$. We denote the Hermitian conjugate  of a matrix $A$ by $A^\ast$, so that $A^\ast = A^\top$ when $A$ is a real-matrix. For vectors $\x,\y\in \F^n$, we denote the standard inner product by $\langle \x,\y\rangle$, and use the convention that $\x,\y\mapsto \langle \x,\y\rangle$ is conjugate-linear in its second argument in the case $\F=\C$. A matrix of size $m \times n$ where all entries are equal to 0 is denoted by $\zero_{m \times n}.$  The  $n \times n$ identity matrix is denoted by $\Id_n$. When the dimensions of the matrix are clear from the context, we may drop the subscript and simply write $\zero$ or $\Id$. 
The set of  $n\times n$ matrices with entries in $\F$ is denoted by $M_n(\F)$. The set of  $n \times n$  Hermitian matrices is denoted by $M_n(\F)^\sa,$ and the set of $n \times n$  Hermitian positive semidefinite matrices is denoted by $M_n(\F)^+.$ Note that a positive semidefinite matrix is necessarily a Hermitian matrix.   We denote the null space (or kernel)  of a matrix $A$ by $\nullspace(A)$  and the column space of $A$ by $\col(A)$. 

In several proofs, we use the standard fact that $\col(A)^\perp = \nullspace(A^\ast)$, where the symbol $\perp$ denotes the \emph{orthogonal complement}. When $A$ is a Hermitian matrix, the previous equality  reduces to $ \col(A)^\perp=\nullspace(A)$. The definition of positive (semi)definiteness and Loewner order follow.
\begin{definition}[Positive definite and positive semidefinite]\cite[Section 7.1]{horn2012matrix}
Let $A \in M_n(\mathbb{\F})^{\sa}$ be a Hermitian matrix. We say that $A$ is positive definite, written $A > 0$, if
        \[
          \langle A\vv, \vv\rangle > 0 \quad \text{for all nonzero } \vv \in \F^n.
        \]
 We say that $A$ is positive semidefinite, written  $A \geq 0$, if
        \[
          \langle A\vv, \vv \rangle \geq 0 \quad \text{for all } \vv \in \F^n.
        \]
\end{definition} 

Frequently, we will use the standard fact that a positive semidefinite matrix $A$ has a matrix \emph{square root} $A^{1/2}$, which is the unique positive semidefinite matrix satisfying $A^{1/2}A^{1/2}=A$, and therefore also that $\langle A\vv,\vv\rangle = \|A^{1/2}\vv\|^2$ for all $\vv\in \F^n$. If $A$ is positive definite, and therefore invertible, $A^{-1/2}$ denotes the square root of the inverse matrix $A^{-1}$.

\begin{definition}[Loewner order]\cite[Def. 7.7.1]{horn2012matrix}
For two Hermitian matrices $A,B\in M_n(\F)^\sa$, we say that $A$ is less than or equal to $B$ in the \emph{Loewner order} if  $B-A$ is positive semidefinite. In this case, we write $A\le B$ or $B\ge A$. 
\end{definition}
From the definition of positive semidefiniteness, it follows that $A\le B$ if and only if $\langle A\vv,\vv\rangle \le \langle B\vv,\vv\rangle$ for all $\vv\in \F^n$. The following lemma is used in Section \ref{sec:main}. It is a standard result that can be found in several matrix algebra textbooks such as \cite{horn2012matrix}.

\begin{lemma}\cite[Section 7.7]{horn2012matrix}
\label{lem:ordering_null_space}
Let $P,Q\in M_n(\F)^\sa.$ If $\zero \leq P \leq Q$, then $\nullspace(Q)\subseteq \nullspace(P)$.
\end{lemma}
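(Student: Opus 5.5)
The plan is to show the containment directly: take an arbitrary vector $\vv\in\nullspace(Q)$ and prove that $P\vv=\zero$. The only tools needed are the defining inequalities of the Loewner order and the existence of square roots of positive semidefinite matrices, as recalled in the preliminaries.

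First, since $P\le Q$, we have $\langle P\vv,\vv\rangle\le\langle Q\vv,\vv\rangle$ for every $\vv\in\F^n$. If $Q\vv=\zero$, the right-hand side vanishes, so $\langle P\vv,\vv\rangle\le 0$. On the other hand, $P\ge\zero$ gives $\langle P\vv,\vv\rangle\ge 0$. Together these force $\langle P\vv,\vv\rangle=0$.

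Second, we pass from a vanishing quadratic form to a vanishing vector. Using the square root of $P$, we write
\[
0=\langle P\vv,\vv\rangle=\|P^{1/2}\vv\|^2,
\]
so $P^{1/2}\vv=\zero$. Applying $P^{1/2}$ once more gives $P\vv=P^{1/2}P^{1/2}\vv=\zero$, that is, $\vv\in\nullspace(P)$. Since $\vv\in\nullspace(Q)$ was arbitrary, this proves $\nullspace(Q)\subseteq\nullspace(P)$.

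This second step, deducing $P\vv=\zero$ from $\langle P\vv,\vv\rangle=0$, is the only one requiring any care, because a vanishing quadratic form does not force $P\vv=\zero$ for a general Hermitian matrix. It is handled entirely by positivity of $P$ through the square root. As an alternative, one could apply the Cauchy--Schwarz inequality for the semi-inner product $(\x,\y)\mapsto\langle P\x,\y\rangle$ to get $|\langle P\vv,\y\rangle|^2\le\langle P\vv,\vv\rangle\langle P\y,\y\rangle=0$ for all $\y$, and hence $P\vv=\zero$.
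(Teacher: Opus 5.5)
Your proposal is correct and follows the paper's proof: you squeeze $\langle P\vv,\vv\rangle$ between $0$ and $\langle Q\vv,\vv\rangle=0$, then use positive semidefiniteness of $P$ to conclude $P\vv=\zero$. The only difference is that you justify this last step explicitly, via $\langle P\vv,\vv\rangle=\|P^{1/2}\vv\|^2$ or Cauchy--Schwarz, whereas the paper simply asserts it.
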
 

\begin{proof}
Let $\vv \in\nullspace(Q)$. Then
\[
0 = \langle \zero \vv,\vv\rangle \le
\langle P\vv,\vv\rangle \le 
\langle Q\vv,\vv\rangle = 0.
\]
This implies  $\langle P\vv,\vv\rangle = 0$. Since $P$ is positive semidefinite, we must have  that $P\vv=\zero$. Therefore, $\vv\in \nullspace(P)$. 
\end{proof}

When dealing with real positive semidefinite matrices, the Loewner order has a simple  geometric interpretation.  The \emph{ellipsoid associated to  $A$} is denoted  by $E_A$ and defined by
\[
E_A=
\{\vv\in \R^n \mid \langle A\vv,\vv\rangle \le 1\}.
\]
The  Loewner order may be described in terms of the ellipsoids $E_A$ and $E_B$, by the equivalency
\begin{equation*} \label{eq:ellipsoidAB}
A \leq B \iff E_A \supseteq E_B.
\end{equation*}

Indeed, suppose that $A,B\in M_n(\R)^\sa$ are positive semidefinite. Then we have
\[
A\le B\iff
\langle A\vv,\vv\rangle \le
\langle B\vv,\vv\rangle \quad\text{for all }\vv\in \R^n.
\]
Since $\langle A\vv,\vv\rangle$ and $\langle B\vv,\vv\rangle$ are nonnegative, a rescaling argument shows that this is equivalent to
\[
\langle B\vv,\vv\rangle\le 1 \implies \langle A\vv,\vv\rangle\le 1 \quad\text{for all }\vv\in \R^n,
\]
i.e. $E_B\subseteq E_A$.

When  $A \in M_n(\R)^\sa$ is positive semidefinite but not positive definite,  the  ellipsoid $E_A$ is \emph{degenerate} meaning that the set is unbounded. Indeed, suppose $\lambda=0$ is an eigenvalue of $A$ with unit eigenvector $\uu$. For any $t \in \R$, consider the vector $\vv_t=t\uu.$ We obtain $$\langle A\vv_t, \vv_t\rangle= t^2\langle A\uu, \uu \rangle=0 \leq 1.$$ We get that $\vv_t$ is in $E_A$ for all $t \in \R.$ Therefore,  $E_A$ is unbounded. 

In general, an $n \times n$  positive definite matrix $A$ has a corresponding ellipsoid with semi-axes of length $1/\sqrt{\lambda_i}$ where $\lambda_i$ is an eigenvalue of $A$ for $i \in \{1, \dots, n\}$. The semi-axes are aligned with the corresponding eigenvectors of $\lambda_i.$

\begin{example}\label{eg:minimal_upper_bounds}
Let us now illustrate (minimal) upper bounds in the Loewner order. The following example is typical and key to the proof of the anti-lattice property in \cite{kadison1951}. Consider the diagonal projection matrices
\[
A_1=\begin{bmatrix}
    1 & 0 \\
    0 & 0 
\end{bmatrix},\qquad
A_2 =
\begin{bmatrix}
    0 & 0 \\
    0 & 1 
\end{bmatrix}.
\]
The identity matrix
\[
B = \Id = \begin{bmatrix}
    1 & 0 \\
    0 & 1
\end{bmatrix}
\]
is an upper bound in the Loewner order for the set $\{A_1,A_2\}$, because $B-A_1=A_2\ge \zero$ and $B-A_2=A_1\ge \zero$. In fact, $B$ is a minimal upper bound for $\{A_1,A_2\}$. Indeed, suppose
\[
X =
\begin{bmatrix}
    x & y \\
    \overline{y} & z
\end{bmatrix}
\]
is a Hermitian matrix that satisfies $X\ge A_1$, $X\ge A_2$, and $X\le B$. Because $B-X$, $X-A_1$, and $X-A_2$ all are positive semidefinite, their diagonal entries are nonnegative and this implies that $x=z=1$. Then $B-X$ is positive semidefinite with zero diagonal entries, and so we must have $B-X=\zero$, so $X=B$. This proves that $B$ is a minimal upper bound.
However, $B$ is not a unique \emph{minimum} upper bound. The matrix
\[
C =
\begin{bmatrix}
    2 & \sqrt{2} \\
    \sqrt{2} & 2
\end{bmatrix}
\]
is also an upper bound for $\{A_1,A_2\}$, and yet $C-B$ is neither positive nor negative semidefinite, so $B$ and $C$ are incomparable in the Loewner order. It is also possible to prove directly--or by using Proposition \ref{minimal condition} below, that $C$ is also a minimal upper bound for $\{A_1,A_2\}$.

This example illustrates that two or more incomparable matrices in the Loewner order have more than one minimal upper bound. In fact, the set of minimal upper bounds is always uncountable, which follows from \cite[Theorem 2.3 and Corollary 2.4]{stott2017minimal} or \cite[Theorem 3.1 and Corollary 3.4]{stott2016maximal}.

In Figure \ref{fig:upper_bounds_example}, we plot the corresponding ellipses for all matrices involved. Minimality of the upper bounds $B$ and $C$ is evidenced by the fact that no larger ellipse is contained in the intersection of the ellipses for $A_1$ and $A_2$. Incomparability of $B$ and $C$ is reflected in the fact that neither ellipse contains the other.

Because $A_1$ and $A_2$ are not definite, their ellipses are degenerate and unbounded. However, by adding any positive definite matrix--such as the identity $\Id$, to all matrices involved, which preserves the Loewner order, we can obtain a geometric picture involving only bounded ellipses.
\end{example}

\begin{figure}[ht]
\centering

\begin{subfigure}[t]{0.48\textwidth}
\centering
\begin{tikzpicture}[scale = \commonscale]
\begin{axis}[axis equal image,xmin=-2,xmax=2,ymin=-2,ymax=2,xtick distance=1,xticklabel=\empty,ytick distance = 1,yticklabel=\empty]
\addplot[thick,blue,domain=-4:4,samples=5,smooth] ({-1},{\x});
\addplot[thick,blue,domain=-4:4,samples=5,smooth] ({1},{\x});
\addplot[thick,blue,domain=-4:4,samples=5,smooth] ({\x},{-1});
\addplot[thick,blue,domain=-4:4,samples=5,smooth] ({\x},{1});
\addplot[thick,purple,domain=0:2*pi,samples=50,smooth,fill=purple,fill opacity = 0.05] ({ellipsex(1,0,1,\x)},{ellipsey(1,0,1,\x)});
\end{axis}
\end{tikzpicture}
\end{subfigure}\hfill
\begin{subfigure}[t]{0.48\textwidth}
\centering
\begin{tikzpicture}[scale = \commonscale]
\begin{axis}[axis equal image,xmin=-2,xmax=2,ymin=-2,ymax=2,xtick distance=1,xticklabel=\empty,ytick distance = 1,yticklabel=\empty]
\addplot[thick,blue,domain=-4:4,samples=5,smooth] ({-1},{\x});
\addplot[thick,blue,domain=-4:4,samples=5,smooth] ({1},{\x});
\addplot[thick,blue,domain=-4:4,samples=5,smooth] ({\x},{-1});
\addplot[thick,blue,domain=-4:4,samples=5,smooth] ({\x},{1});
\addplot[thick,purple,domain=0:2*pi,samples=50,smooth,fill=purple,fill opacity = 0.05] ({ellipsex((1/sqrt(2+sqrt(2))+1/sqrt(2-sqrt(2)))/2,
(1/sqrt(2+sqrt(2))-1/sqrt(2-sqrt(2)))/2,(1/sqrt(2+sqrt(2))+1/sqrt(2-sqrt(2)))/2,\x)},{ellipsey((1/sqrt(2+sqrt(2))+1/sqrt(2-sqrt(2)))/2,
(1/sqrt(2+sqrt(2))-1/sqrt(2-sqrt(2)))/2,(1/sqrt(2+sqrt(2))+1/sqrt(2-sqrt(2)))/2,\x)});
\end{axis}
\end{tikzpicture}
\end{subfigure}

\vspace{1em}

\begin{subfigure}[t]{0.48\textwidth}
\centering
\begin{tikzpicture}[scale = \commonscale]
\begin{axis}[axis equal image,xmin=-1.5,xmax=1.5,ymin=-1.5,ymax=1.5,xtick distance=1,xticklabel=\empty,ytick distance = 1,yticklabel=\empty]
\addplot[thick,blue,domain=0:2*pi,samples=50,smooth] ({ellipsex(1/sqrt(2),0,1,\x)},{ellipsey(1/sqrt(2),0,1,\x)});
\addplot[thick,blue,domain=0:2*pi,samples=50,smooth] ({ellipsex(1,0,1/sqrt(2),\x)},{ellipsey(1,0,1/sqrt(2),\x)});
\addplot[thick,purple,domain=0:2*pi,samples=50,smooth,fill=purple,fill opacity = 0.05] ({ellipsex(1/sqrt(2),0,1/sqrt(2),\x)},{ellipsey(1/sqrt(2),0,1/sqrt(2),\x)});
\end{axis}
\end{tikzpicture}
\end{subfigure} \hfill
\begin{subfigure}[t]{0.48\textwidth}
\centering
\begin{tikzpicture}[scale = \commonscale]
\begin{axis}[axis equal image,xmin=-1.5,xmax=1.5,ymin=-1.5,ymax=1.5,xtick distance=1,xticklabel=\empty,ytick distance = 1,yticklabel=\empty]
\addplot[thick,blue,domain=0:2*pi,samples=50,smooth] ({ellipsex(1/sqrt(2),0,1,\x)},{ellipsey(1/sqrt(2),0,1,\x)});
\addplot[thick,blue,domain=0:2*pi,samples=50,smooth] ({ellipsex(1,0,1/sqrt(2),\x)},{ellipsey(1,0,1/sqrt(2),\x)});
\addplot[thick,purple,domain=0:2*pi,samples=50,smooth,fill=purple,fill opacity = 0.05] ({ellipsex((1/sqrt(3+sqrt(2))+1/sqrt(3-sqrt(2)))/2,
(1/sqrt(3+sqrt(2))-1/sqrt(3-sqrt(2)))/2,(1/sqrt(3+sqrt(2))+1/sqrt(3-sqrt(2)))/2,\x)},{ellipsey((1/sqrt(3+sqrt(2))+1/sqrt(3-sqrt(2)))/2,
(1/sqrt(3+sqrt(2))-1/sqrt(3-sqrt(2)))/2,(1/sqrt(3+sqrt(2))+1/sqrt(3-sqrt(2)))/2,\x)});
\end{axis}
\end{tikzpicture}
\end{subfigure} \hfill

\caption{Plots of the ellipses associated to the matrices in Example \ref{eg:minimal_upper_bounds}. \\\\
Top Left: The ellipses $E_{A_1}$ and $E_{A_2}$ associated to $A_1$ and $A_2$ are degenerate, and correspond to the strips $-1\le x\le 1$ and $-1\le y\le 1$ (boundaries drawn in blue). The identity matrix $B=\Id$ has the unit disk $E_B$ as its ellipse (shaded). Because the unit circle is contained in both strips, $B$ is an upper bound for $\{A_1,A_2\}$. Because $B$ is a minimal upper bound, there is no ellipse strictly larger than $E_B$ that is contained in $E_{A_1}\cap E_{A_2}$.\\\\
Top Right: The ellipse $E_C$ is also a maximal ellipse contained in $E_{A_1}\cap E_{A_2}$, reflecting the fact that $C$ is also a minimal upper bound for $\{A_1,A_2\}$. The ellipse $E_C$ is not contained in and does not contain $E_B$, reflecting the fact that $B$ and $C$ are incomparable in the Loewner order.\\\\
Bottom panels: The ellipses corresponding to $A_1+\Id$, $A_2+\Id$ (unfilled), $B+\Id$ (left panel), and $C+\Id$ (right panel). All the same features as in the top panels are present, but by adding the identity matrix to all matrices involved, we obtain a picture involving only bounded nondegenerate ellipses.}
\label{fig:upper_bounds_example}
\end{figure}

In general, when finding upper bounds for a finite set $\{A_1,\ldots,A_k\}$ of Hermitian matrices, by adding a sufficiently large multiple of the identity matrix (larger than the most negative eigenvalue of any $A_i$), we may assume without loss of generality that all matrices involved are positive definite. Positive definiteness is not necessary in our main algorithm below, but it allows for a geometric picture in terms of bounded ellipsoids.

We are now ready to introduce the main results of this paper.

\section{Main Results} \label{sec:main}

In this section, we introduce an algorithm (Algorithm \ref{main_algorithm}) that outputs  an exact  minimal upper bound for any (non-empty) finite set of $n\times n$ Hermitian matrices. The algorithm takes as  input any upper bound for the  finite set. 
Theorem \ref{thm:algorithm_terminates} represents  one of the main results of this paper. It shows  that the algorithm ends in at most $n$ iterations. Before introducing the algorithm,  we introduce the theoretical results needed to show the exactness of the algorithm. 

The key idea in the algorithm that follows is to produce a decreasing sequence of upper bound matrices by subtracting off a positive definite rank one matrix at each step. Every rank one positive definite matrix is of the form $\lambda \e \e^\ast$, where $\e$ is a unit vector and $\lambda>0$. Given a positive semidefinite matrix $P\ge \zero$, the next lemma identifies exactly which nonnegative scalars $\lambda$ satisfy $\lambda \e\e^\ast\le P$. It states that the set of such $\lambda$ forms a closed interval $[0,\lambda_{\max}]$, where $\lambda_{\max}>0$ if and only if $\e$ is in the column space $\col(P)$.

\begin{lemma}
\label{lem:lambda}
Let $P \in M_n(\mathbb{F})^+$  be a positive semidefinite matrix. Let $\e \neq \zero$ be in $\F^n$. Consider the associated rank one Hermitian matrix $\e\e^\ast$. Then the following statements hold:
\begin{enumerate}[label=(\arabic*)]
\item There exists $\lambda>0$ such that $\lambda \e \e^\ast \leq P$ if and only if $\e\in \col(P)$.

\item If (1) holds, then the maximum value of $\lambda>0$ satisfying $\lambda \e \e^* \le P$ is $$\lambda = \frac{1}{\langle P\uu,\uu \rangle} \quad  \text{for any }\uu\in \F^n  \text{ such that }  P\uu=\e.$$

\item If $\e\in \col(P)$ and $\lambda =\max\{ \mu  > 0 \mid \mu \e\e^* \le P \}$, then $\e \not\in \col(P - \lambda  \e \e^*)$.

\end{enumerate}
\end{lemma}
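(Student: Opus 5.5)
The plan is to reduce everything to the Cauchy--Schwarz inequality for the positive semidefinite sesquilinear form $(\x,\y)\mapsto\langle P\x,\y\rangle$, together with Lemma \ref{lem:ordering_null_space}. First note that for $\lambda\ge 0$, the condition $\lambda\e\e^\ast\le P$ means
\[
\lambda\,|\langle \vv,\e\rangle|^2\le \langle P\vv,\vv\rangle \quad\text{for all }\vv\in\F^n .
\]

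For (1), the forward direction uses Lemma \ref{lem:ordering_null_space}. If $\lambda>0$ and $\zero\le\lambda\e\e^\ast\le P$, then $\nullspace(P)\subseteq\nullspace(\e\e^\ast)=\e^\perp$. Taking orthogonal complements and using $\col(P)=\nullspace(P)^\perp$ gives $\e\in\col(P)$. The converse follows from the estimate in (2).

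For (2), fix $\uu$ with $P\uu=\e$. For any $\vv$, Cauchy--Schwarz for the form above gives
\[
|\langle\vv,\e\rangle|^2=|\langle P\vv,\uu\rangle|^2\le\langle P\vv,\vv\rangle\langle P\uu,\uu\rangle .
\]
Here $\langle P\uu,\uu\rangle=\langle\e,\uu\rangle>0$, since it is nonnegative, and if it were zero then $P^{1/2}\uu=\zero$, hence $\e=P\uu=\zero$. So $\lambda=1/\langle P\uu,\uu\rangle$ is admissible, which also completes the converse of (1). For maximality, test the inequality at $\vv=\uu$. This gives
\[
\mu\,\langle\uu,\e\rangle^2\le\langle P\uu,\uu\rangle=\langle\uu,\e\rangle,
\]
so $\mu\le 1/\langle P\uu,\uu\rangle$. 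This also shows the value does not depend on the choice of $\uu$. The set of admissible $\mu$ is closed, so the maximum is attained.

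For (3), let $\lambda=1/\langle P\uu,\uu\rangle$. Then
\[
(P-\lambda\e\e^\ast)\uu=\e-\lambda\langle\uu,\e\rangle\e=\zero,
\]
so $\uu\in\nullspace(P-\lambda\e\e^\ast)=\col(P-\lambda\e\e^\ast)^\perp$, using that the matrix is Hermitian. Since $\langle\e,\uu\rangle=\langle P\uu,\uu\rangle\neq0$, $\e$ is not orthogonal to $\uu$, and hence $\e\notin\col(P-\lambda\e\e^\ast)$.

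The main obstacle is the Cauchy--Schwarz step for a merely semidefinite form. I would justify it by writing $\langle P\x,\y\rangle=\langle P^{1/2}\x,P^{1/2}\y\rangle$ and applying the ordinary Cauchy--Schwarz inequality. Beyond that, one has to keep track of conjugates in the complex case, for example $\langle\uu,\e\rangle=\overline{\langle\e,\uu\rangle}$, which is real here.
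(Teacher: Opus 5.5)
Your proof is correct. Parts (1) and (2) follow the paper: Cauchy--Schwarz for the form $(\x,\y)\mapsto\langle P\x,\y\rangle$ shows $1/\langle P\uu,\uu\rangle$ is admissible, and testing at $\vv=\uu$ shows it is an upper bound. For the forward direction of (1) you cite Lemma \ref{lem:ordering_null_space} to get $\nullspace(P)\subseteq\e^\perp$. The paper instead picks $\x\in\nullspace(P)$ with $\langle\x,\e\rangle\neq0$ and checks $\langle(P-\lambda\e\e^\ast)\x,\x\rangle<0$ directly. That is the same fact in different packaging.

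The genuine difference is in (3). The paper argues by maximality. If $\e$ were still in $\col(P-\lambda\e\e^\ast)$, then (1) applied to the positive semidefinite matrix $P-\lambda\e\e^\ast$ would allow subtracting a further positive multiple of $\e\e^\ast$, contradicting maximality of $\lambda$. That argument uses only (1) and never needs the formula for $\lambda$. You instead use the formula from (2) to verify directly that $(P-\lambda\e\e^\ast)\uu=\zero$ while $\langle\e,\uu\rangle\neq0$.

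Your version depends on (2), but it is constructive: it names the new kernel vector. In the algorithm, take an index $i$ attaining the minimum in Step 5. Then the vector $\uu_i$ from Step 4 lies in $\nullspace(B_{r+1}-A_i)$ but not in $E_r$, because $\e\in E_r^\perp$ and $\langle\e,\uu_i\rangle\neq0$. This gives an explicit witness for the strict inclusion in Proposition \ref{prop:dimension_decreases}. The paper's route is shorter and more conceptual.

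Two small points. Your justification that $\langle P\uu,\uu\rangle>0$ (namely $P^{1/2}\uu\neq\zero$ because $P\uu=\e\neq\zero$) is more explicit than the paper's. The closedness remark at the end of (2) is superfluous, since you have already exhibited an admissible value that dominates all others.
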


\begin{proof}
(1) Suppose that $\e \not \in \text{col}(P)$ and let $\lambda>0$. Since $\text{col}(P) = \text{null}(P)^\perp$, we must have that $\e$ is not orthogonal to every vector in null$(P)$ and so there is a vector $\x \in \text{null}(P) \text{ such that } \langle \x, \e \rangle \ne 0$. Therefore,
\[
\langle (P-\lambda \e \e^\ast)\x,\x\rangle =
0-\lambda \langle (\e\e^\ast)\x,\x\rangle = 
-\lambda |\langle \x,\e\rangle|^2.
\]
Since $\lambda>0$, this shows that $P-\lambda \e\e^\ast$ is not positive semidefinite. 

Conversely, suppose that $\e \in \col(P)$. Let $\uu \in \mathbb{F}^n$ satisfy $P\uu = \e$, and note that since $\e \not = \zero$, we also have $\uu \not = \zero$ and $\langle P\uu,\uu\rangle = \|P^{1/2}\uu\|^2 >0$. Let $\vv \in \mathbb{F}^n$. We know that $ \langle (\e\e^*)\vv,\vv \rangle = | \langle \e, \vv \rangle |^2 $. Then
\begin{align*}
\langle (\e\e^\ast)\vv,\vv\rangle =
|\langle \e,\vv\rangle|^2 =
|\langle P\uu,\vv\rangle|^2 \le
\langle P\uu,\uu\rangle \langle P\vv,\vv\rangle,
\end{align*}
where the last step follows from the Cauchy-Schwarz inequality applied to the positive semidefinite sesquilinear form $(\x,\y)\mapsto \langle P\x,\y\rangle$. This shows that $$\frac{1}{\langle P\uu, \uu \rangle}\langle (\e\e^*)\vv,\vv \rangle \le \langle P\vv, \vv \rangle$$ and because $\vv$ was arbitrary, we have $\frac{1}{\langle P\uu, \uu \rangle} \e \e^* \le P $. 

Note that this proof actually shows that $\langle P\uu,\uu\rangle$ has the same value for every vector $\uu\in \F^n$ such that $P\uu=\e$. This can be also proved directly. If $\uu,\vv\in \F^n$ satisfy $P\uu=P\vv=\e$, then 
\[
\langle P\uu,\uu\rangle =
\langle P\vv,\uu\rangle =
\langle \vv,P\uu\rangle =
\langle \vv,P\vv\rangle =
\langle P\vv,\vv\rangle.
\]

\noindent (2) Suppose that $\e \in \col(P)$ and let $\uu$ be any vector satisfying $P\uu=\e$. Hence, $\langle P\uu,\uu\rangle \ne 0$. Let $\lambda > 0$ be any positive number that satisfies $\lambda \e\e^\ast \le P$. We have
\[
\langle P\uu,\uu\rangle \ge
\langle \lambda (\e\e^\ast)\uu,\uu\rangle =
\lambda |\langle \uu,\e\rangle|^2 =
\lambda \langle P\uu,\uu\rangle^2.
\]
Therefore, $\lambda \le \frac{1}{\langle P\uu,\uu\rangle}$. Since  $\frac{1}{\langle P\uu,\uu\rangle}\e\e^\ast \le P,$ we obtain
\[
\frac{1}{\langle P\uu,\uu\rangle} =
\max\{\lambda>0\mid \lambda \e \e^\ast \le P\}.
\]

\noindent (3) Suppose that $\mu>0$ satisfies $\mu \e\e^\ast \le P$, and that $\e \in \col(P-\mu \e \e^\ast)$. Since $\e\in \col(P-\mu \e\e^\ast)$, by (1) there exists $\lambda>0$ such that $\lambda \e\e^\ast \le P-\mu \e\e^\ast$. Rearranging gives $(\mu + \lambda)\e\e^\ast \le P$. Hence,  $\mu$ is not maximal. 
\end{proof}

Now, to identify minimal upper bounds, we first need to generalize Lemma \ref{lem:ordering_null_space} to the setting of two comparable upper bounds for the same set of matrices.

\begin{lemma}
\label{null_space_proof}
Let $A_1,\ldots,A_k,B,C\in M_n(\F)^\sa$ be Hermitian matrices. If  $A_i\le C\le B$ for each $i \in \{1, \dots, k\},$ then $$ \sum_{i=1}^k \nullspace(B-A_i) \subseteq \nullspace(B-C).$$
\end{lemma}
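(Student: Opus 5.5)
Since $\nullspace(B-C)$ is a subspace, it is closed under sums. It therefore suffices to show, for each fixed $i \in \{1,\dots,k\}$, that
\[
\nullspace(B-A_i)\subseteq \nullspace(B-C).
\]
The sum inclusion then follows immediately.

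Fix $i$ and apply Lemma \ref{lem:ordering_null_space} with $P = B-C$ and $Q = B-A_i$. The hypothesis $C \le B$ gives $\zero \le B-C$. The hypothesis $A_i \le C$ gives $\zero \le C - A_i$, and adding this to $B-C$ yields
\[
B-C \le (B-C) + (C-A_i) = B-A_i,
\]
since positive semidefinite matrices are closed under addition. So $\zero \le P \le Q$, and Lemma \ref{lem:ordering_null_space} gives $\nullspace(B-A_i)\subseteq\nullspace(B-C)$.

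Finally, take any $\vv = \sum_i \vv_i$ with $\vv_i \in \nullspace(B-A_i)$. Each $\vv_i$ lies in $\nullspace(B-C)$, so $(B-C)\vv = \sum_i (B-C)\vv_i = \zero$, and hence $\vv \in \nullspace(B-C)$. No step here is a genuine obstacle. The only point needing care is checking the sandwich $\zero \le B-C \le B-A_i$ in the correct direction, which is exactly what the two halves of the hypothesis $A_i\le C\le B$ provide.
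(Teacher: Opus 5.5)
Your proposal is correct and follows essentially the same approach as the paper: establish the sandwich $\zero \le B-C \le B-A_i$ from the two halves of the hypothesis, apply Lemma~\ref{lem:ordering_null_space} to get $\nullspace(B-A_i)\subseteq\nullspace(B-C)$ for each $i$, and conclude because $\nullspace(B-C)$ is a subspace and hence closed under sums.
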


\begin{proof}
For each $i$, we have $$\zero\le B-C\le B-A_i.$$ By Lemma  \ref{lem:ordering_null_space}, $   \nullspace(B-A_i)\subseteq \nullspace(B-C)$ for all $i \in \{1, \dots, k\}.$ Since $\nullspace(B-C)$ is a linear subspace, we must have 
\[
\sum_{i=1}^k \nullspace(B-A_i)\subseteq \nullspace(B-C). \qedhere\]
\end{proof}

Next we give an independent proof of an algebraic characterization due to Stott \cite[Theorem 1.15]{stott2017minimal}. It provides a sufficient and necessary  condition for an upper bound to be  minimal.
\begin{proposition}\cite[Theorem 1.15]{stott2017minimal}
\label{minimal condition}
Suppose $A_1, \cdots,A_k,B \in M_n(\mathbb{F})^\sa$ are Hermitian matrices, and that $B\ge A_i$ for all $i$, so that $B$ is an upper bound for $\{A_1,\ldots,A_k\}$ in the Loewner order. Let $E = \sum_{i = 1}^{k} \text{null}(B-A_i) $. The upper bound $B$ is minimal if and only if $E=\mathbb{F}^n$.
\end{proposition}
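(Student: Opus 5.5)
The proof splits into the two implications, and each uses one of the lemmas above.

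\emph{Sufficiency ($E=\F^n$ implies minimality).} Suppose $E=\F^n$, and let $C$ be an upper bound with $A_i\le C\le B$ for all $i$. By Lemma \ref{null_space_proof},
\[
\F^n = E \subseteq \nullspace(B-C).
\]
Hence $B-C=\zero$, so $C=B$. Therefore no upper bound lies strictly below $B$, which means $B$ is minimal.

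\emph{Necessity (minimality implies $E=\F^n$).} I will prove the contrapositive: if $E\neq \F^n$, I will construct a strictly smaller upper bound. Since $E$ is a proper subspace, pick a unit vector $\e\in E^\perp$. Then $\e\perp \nullspace(B-A_i)$ for every $i$. Because $\col(B-A_i)=\nullspace(B-A_i)^\perp$, this gives $\e\in\col(B-A_i)$ for each $i$.

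Now apply Lemma \ref{lem:lambda}(1) with $P=B-A_i$, which is positive semidefinite. For each $i$ this yields some $\lambda_i>0$ with $\lambda_i\e\e^\ast\le B-A_i$. Set $\lambda=\min_i\lambda_i>0$; this uses that the collection is finite. Since $\e\e^\ast\ge\zero$, we get
\[
\lambda\e\e^\ast\le\lambda_i\e\e^\ast\le B-A_i \quad\text{for every } i.
\]
Thus $C=B-\lambda\e\e^\ast$ satisfies $C\ge A_i$ for all $i$, so $C$ is an upper bound. Moreover $C\le B$ and $C\ne B$, because $\lambda\e\e^\ast\neq\zero$. This contradicts minimality of $B$.

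The main point needing care is the identification $E^\perp\subseteq\col(B-A_i)$, which uses that $B-A_i$ is Hermitian. The other point is taking the minimum over the $\lambda_i$, which is where finiteness of the collection is used. Beyond these, the argument is a direct application of the earlier lemmas.
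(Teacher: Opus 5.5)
Your proof is correct and matches the paper's argument essentially step for step. Sufficiency uses Lemma \ref{null_space_proof} to force $B-C=\zero$, and necessity picks a nonzero $\e\in E^\perp=\bigcap_i\col(B-A_i)$, applies Lemma \ref{lem:lambda}(1) to each $B-A_i$, and subtracts $\lambda\e\e^\ast$ with $\lambda=\min_i\lambda_i$. (Your minimum over $i=1,\ldots,k$ is also correct where the paper has a typo and writes $\min\{\lambda_1,\ldots,\lambda_n\}$.)
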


\begin{proof}
Suppose that $E=\F^n$ and that $C\le B$ is also an upper bound of $A_1,\ldots,A_k$.  We have $A_i\le C\le B$ for all $i$. By Lemma \ref{null_space_proof}, we have $E=\F^n\subseteq \nullspace(B-C)$, and so $B-C=\zero$, hence $C=B$. This proves that $B$ is a minimal upper bound.

Conversely, suppose that $E\ne \F^n$. We will prove that $B$ is not minimal. Since $E\ne \F^n$, $E^\perp \ne \{0\}$. Choose a vector $\e \in E^\perp = \bigcap_{i=1}^k \col(B-A_i)$ with $\e\ne \zero$. By Lemma \ref{lem:lambda}.(1), there exist $\lambda_1,\ldots,\lambda_k>0$ such that $\lambda_i \e\e^\ast \le B - A_i$ for each $i$. Set $\lambda = \min\{\lambda_1,\ldots,\lambda_n\}$. Then $\lambda \e\e^\ast \le \lambda_i \e\e^\ast \le B-A_i$ for each $i$, and so
\[
A_i \le 
B-\lambda \e\e^\ast
\]
for all $i$. Therefore $B-\lambda \e\e^\ast\le B$ is an upper bound for $A_1,\ldots,A_k$ that is strictly smaller than $B$, so $B$ is not minimal.
\end{proof}

Equivalently, $E=\F^n$ if and only if $E^\perp = \bigcap_{i=1}^k \col(B-A_i)=\{\zero\}$. The subspace $E$ is the span of all those vectors $\vv$ that satisfy equality $B\vv=A_i\vv$ for at least one $i$. Therefore Proposition \ref{minimal condition} states that an upper bound is minimal when $B$, viewed as a linear transformation, agrees with at least one $A_i$ on enough vectors to span the whole space.

Proposition \ref{minimal condition} motivates the construction of the following algorithm. Given an upper bound $B\ge A_1,\ldots, A_k$ that is not minimal, the proof in fact showed explicitly how to find a smaller upper bound $B-\lambda \e \e^\ast$ by subtracting off a rank one matrix with range spanned by a vector $\e\in E^\perp$. This gives an iterative method to improve any non-minimal upper bound to a smaller one, which is the basis of the following algorithm. 

\begin{algorithm}\label{main_algorithm}
Provided a finite set of $n\times n$ Hermitian matrices $\{A_1, ..., A_k \}$ and some upper bound $B$ for $\{A_1,\ldots,A_k\}$. Starting with $B_0 := B,$ the following algorithm recursively produces a sequence of upper bounds $B_0 \ge B_1 \ge B_2 \ge \cdots \ge B_m$ such that $B_m$ is a minimal upper bound, and $m\le n$ (see Theorem \ref{thm:algorithm_terminates}).

Recursively, for each $r=0,1,2,\ldots$, assuming that $B_r$ has been computed:

\begin{enumerate}
\item Calculate $\Delta_i = B_r - A_i$ for all $i$.

\item Then, check whether $E_r:= \sum_{i=0}^k\nullspace(\Delta_i)=\F^n$, or not. If $E_r=\F^n$, then $B_r$ is a minimal upper bound and the algorithm terminates. Otherwise:

\item Arbitrarily choose any nonzero vector $\e\in E_r^\perp=\bigcap_{i=1}^k \col(\Delta_i)$. 

(Optionally, to reduce numerical overflow errors, normalize $\e$.)

\item For each $i$, find a vector $\uu_i\in \F_n$ with $\Delta_i \uu_i=\e$, and set 
\[
\lambda_i=\frac{1}{\langle \Delta_i \uu_i,\uu_i\rangle} = \frac{1}{\langle \e,\uu_i\rangle} \quad\left(=\max \{\mu >0\mid \mu \e\e^\ast \le \Delta_i\}\right).
\]

\item Set $\lambda := \min\{\lambda_1,\ldots,\lambda_k\}$, then set $B_{r+1}:=B_r-\lambda \e \e^\ast$, and recurse.
\end{enumerate}
\end{algorithm}

Note that given any such set $\{A_1,\ldots,A_k\}$, it is always possible to find an upper bound $B$ of the form $c\Id$, where $c>0$ is a scalar. For example, it suffices to choose $c\ge\max\{\lambda_{\max}(A_1),\ldots,\lambda_{\max}(A_k)\}$, where $\lambda_{\max}(A_i)$ denotes the largest eigenvalue of $A_i$.

Every step in Algorithm \ref{main_algorithm} can be implemented with standard methods. For example, in Step 2, one may use singular value decomposition to compute orthonormal bases for each null space $\nullspace(\Delta_i)$. If the union of these bases do not span $\R^n$, then singular value decomposition can be used again to find an orthonormal basis for $E_r^\perp$, from which $\e$ can be selected arbitrarily.

To avoid numerical issues in Step 4 when $\Delta_i$ is singular (or nearly singular), a robust approach is to set $\uu_i = \Delta_i^\dagger \e$, where $\Delta_i^\dagger$ denotes the Moore-Penrose pseudoinverse of $\Delta_i.$

Note that while we framed the algorithm in terms of finding minimal upper bounds, it can also be used to calculate maximal lower bounds in the Loewner order. Indeed, $B$ is a (maximal) lower bound for $\{A_1,\ldots,A_k\}$ if and only if $-B$ is a (minimal) upper bound for $\{-A_1,\ldots,-A_k\}$, and so one needs only to negate all matrices involved.

We now prove that no matter how the vector $\e$ in Step 3 is chosen, the algorithm will always converge to a minimal upper bound in $n$ steps or less. However, different choices of $\e$ at each step will result in different minimal upper bounds produced by this algorithm (see Example \ref{2x2_example} and Remark \ref{rem:every_minimal_bound?} below).

\pagebreak
\begin{pseudocode}
What follows is pseudocode for a recursive implementation of Algorithm \ref{main_algorithm}.

\label{main_algorithm_code}
\begin{algorithmic}
\BeginBox
\Function{MinUpperBoundBelow}{$B$, $S=\{A_1,\ldots,A_k\}$}
    \LComment{Given an upper bound $B$ for $\{A_1,\ldots,A_k\}$, recursively computes a minimal upper bound for $\{A_1,\ldots,A_k\}$ which is dominated by $B$.}
    \For{$i=1,\ldots,k$}
        \State $\Delta_i\gets B-A_i$
        \State $N_i\gets \text{nullbasis}(\Delta_i)$
        \Comment{cols. of $N_i$ are an ONB for $\nullspace(\Delta_i)$}
    \EndFor
    \State $N\gets [N_1\;\cdots \; N_k]$
    \If{$\text{rank}(S)=n$}
        \State \Return B
    \Else
        \State $e\gets \text{nullbasis}(N^\ast)[1]$
        \Comment{Choose first col. of ONB for $\nullspace(N^\ast)=E_r^\perp$.}
        \For{$i=1,\ldots,k$}
            \State $\lambda_i\gets 1/\langle \Delta_i^\dagger \e,\e\rangle$.
        \EndFor
        \State $\lambda \gets \min\{\lambda_1,\ldots,\lambda_k\}$.
        \State\Return MinUpperBoundBelow($B-\lambda \e\e^\ast,S$).
    \EndIf
\EndFunction

\Function{MinUpperBound}{$S=\{A_1,\ldots,A_k\}$}
    \LComment{Returns a minimal upper bound for $\{A_1,\ldots,A_k\}$ whose largest eigenvalue does not exceed all eigenvalues of all $A_i$}
    \For{$i=1,\ldots,k$}
        \State $M_i\gets \lambda_{\max}(A_i)$
        \Comment{Maximum eigenvalue of $A_i$}
    \EndFor
    \State \Return MinUpperBoundBelow($\max\{M_1,\ldots,M_k\}\Id$,$S$)
\EndFunction
\EndBox
\end{algorithmic}
\end{pseudocode}

Our next goal is to prove that the algorithm in fact terminates at a minimal upper bound in finitely many steps. The first key observation are that the spaces $E_r$ are an increasing chain:
\[
E_0\subseteq E_1\subseteq E_2\subseteq \cdots.
\]
The second key observation is that given any choice of nonzero $\e \in E_r^\perp$, as long as $\lambda$ is chosen optimally, then $E_r\ne E_{r+1}$ at each step. The next proposition presents these details in generality.

\begin{proposition}
\label{prop:dimension_decreases}
Let $B, A_1,\ldots, A_k$ be Hermitian matrices. Suppose that $B$ is an upper bound for $\{A_1,\ldots, A_k\}$, and let $E = \sum_{i=1}^k \nullspace(B-A_i).$ Suppose that $\e\in E^\perp$ is nonzero, and $\lambda$ is chosen optimally in the sense that
\begin{equation}\label{eq:optimal_lambda}
\lambda = \min_{i=1,\ldots,k} \max \{\mu\ge 0 \mid \mu \e\e^\ast \le B-A_i\}>0.
\end{equation}
Define $C=B-\lambda \e\e^\ast$ and $F=\sum_{i=1}^k \nullspace(C-A_i)$. Then $C$ is an upper bound for $\{A_1,\ldots,A_k\}$ and $E\subsetneq F$.
\end{proposition}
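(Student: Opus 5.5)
The plan has three parts. First, $C$ is an upper bound. Second, $E\subseteq F$. Third, the inclusion is strict, witnessed by a vector $\uu$ with $\uu\in F$ but $\uu\notin E$.

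\emph{Upper bound and $\lambda>0$.} Since $\e\in E^\perp=\bigcap_{i=1}^k \col(B-A_i)$, Lemma \ref{lem:lambda}.(1) gives, for each $i$, a positive maximum
\[
\lambda_i=\max\{\mu\ge 0\mid \mu\e\e^\ast\le B-A_i\}.
\]
Lemma \ref{lem:lambda}.(2) shows this maximum is attained and equals $1/\langle (B-A_i)\uu_i,\uu_i\rangle$. Hence $\lambda=\min_i\lambda_i>0$. As in the proof of Proposition \ref{minimal condition}, $\lambda\e\e^\ast\le\lambda_i\e\e^\ast\le B-A_i$ for every $i$, so $C-A_i\ge\zero$ and $C$ is an upper bound.

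\emph{Inclusion $E\subseteq F$.} We have $A_i\le C\le B$ for all $i$, so it suffices to show $\nullspace(B-A_i)\subseteq\nullspace(C-A_i)$ for each $i$ and then sum. Note that $\zero\le C-A_i\le B-A_i$, so Lemma \ref{lem:ordering_null_space} gives exactly this inclusion. It can also be checked directly: if $(B-A_i)\vv=\zero$, then $\vv\perp\e$ because $\e\in\col(B-A_i)=\nullspace(B-A_i)^\perp$, so $\e\e^\ast\vv=\zero$ and $(C-A_i)\vv=\zero$.

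\emph{Strictness, the main step.} Pick an index $j$ with $\lambda=\lambda_j$. Lemma \ref{lem:lambda}.(3) gives $\e\notin\col(C-A_j)$, so $\nullspace(C-A_j)\not\subseteq\e^\perp$. I will avoid relying on that alone and produce an explicit vector. Let $\uu\in\F^n$ satisfy $(B-A_j)\uu=\e$, so that $\lambda=1/\langle\e,\uu\rangle$. Then
\[
(C-A_j)\uu=\e-\lambda\e\langle\uu,\e\rangle=\e-\e=\zero,
\]
using that $\langle\uu,\e\rangle=\langle \e,\uu\rangle$ is real and positive. Thus $\uu\in\nullspace(C-A_j)\subseteq F$. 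Moreover $\langle\uu,\e\rangle=1/\lambda\ne0$, so $\uu\notin\e^\perp$. Since $\e\in E^\perp$ gives $E\subseteq\e^\perp$, it follows that $\uu\notin E$, and therefore $E\subsetneq F$.

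The only delicate point is keeping the conjugation conventions straight when computing $\e\e^\ast\uu=\langle\uu,\e\rangle\e$. The realness of $\langle\e,\uu\rangle=\langle (B-A_j)\uu,\uu\rangle$ resolves it.
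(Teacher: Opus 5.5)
Your proof is correct. The first two parts, that $C$ is an upper bound and that $E\subseteq F$ via Lemma \ref{lem:ordering_null_space}, match the paper's. Your direct check that $\nullspace(B-A_i)\subseteq\e^\perp$ is a harmless extra.

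The genuine difference is in the strictness step. The paper works with orthogonal complements. It invokes Lemma \ref{lem:lambda}.(3), which is proved by contradiction: if $\e$ were still in $\col(C-A_j)$, one could enlarge $\lambda$. This gives $\e\notin\col(C-A_j)$ for some $j$, so $\e\in E^\perp\setminus F^\perp$, and hence $F^\perp\subsetneq E^\perp$.

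You instead exhibit a witness in $F$ itself. For a minimizing index $j$, the preimage $\uu$ of $\e$ under $B-A_j$ satisfies $(C-A_j)\uu=\zero$ and $\langle\uu,\e\rangle=1/\lambda\neq0$, so $\uu\in F\setminus E$.

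What your route buys:
\begin{itemize}
\item It uses only parts (1) and (2) of Lemma \ref{lem:lambda}, and in this setting it actually reproves part (3). Since $\uu\in\nullspace(C-A_j)$ and $\langle\uu,\e\rangle\neq0$, it follows that $\e\notin\nullspace(C-A_j)^\perp=\col(C-A_j)$.
\item It identifies the new null direction. It is exactly the vector $\uu_j$ already computed in Step 4 of Algorithm \ref{main_algorithm} for an index attaining the minimum. In Example \ref{2x2_example}, for instance, $\uu_2=\frac{1}{10}[3\;1]^\top$ spans $E_1$, and at the next step $\uu_1=\frac{1}{2}[1\;3]^\top$ is the new vector in $\nullspace(B_2-A_1)$.
\end{itemize}

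The paper's version is shorter once Lemma \ref{lem:lambda}.(3) is available. It also stays in the column-space language in which the algorithm selects $\e$.
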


\begin{proof}
We have $\lambda \e\e^\ast \le B-A_i$ for each $i$, and so $C\ge A_i$ for each $i$, meaning that $C$ is an upper bound. Therefore $\zero \le C-A_i\le B-A_i$, and so Lemma \ref{lem:ordering_null_space} implies that $E\subseteq F$. 

\medskip

Now, since
\[
\e \in E^\perp = \bigcap_{i=1}^k \col(B-A_i),
\]
we have that $\lambda$ is strictly positive by Lemma \ref{lem:lambda}.(1). Moreover, Lemma \ref{lem:lambda}.(3) implies that $\e\not \in \col(B-A_i-\lambda \e\e^\ast)=\col(C-A_i)$ for some $i$, and so
\[
\e\not\in \bigcap_{i=1}^k \col(C-A_i)=F^\perp.
\]
Therefore $\e\in E^\perp\setminus F^\perp$, showing that $E^\perp\ne F^\perp$, so $E\ne F$.
\end{proof}

\begin{theorem}
\label{thm:algorithm_terminates}

If $\{A_1,\ldots,A_k\}$ is a set of $n\times n$ Hermitian matrices and $B\in M_n(\F)^\sa$ is an upper bound for $\{A_1,\ldots A_k\}$, then Algorithm \ref{main_algorithm} converges to a minimal upper bound for $\{A_1,\ldots,A_k\}$ less than $B$ in at most $n$ steps.
\end{theorem}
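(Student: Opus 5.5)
The plan is to argue by induction on the iteration count, using Proposition \ref{prop:dimension_decreases} as the engine and Proposition \ref{minimal condition} as the stopping certificate. Write $B_0=B$ and, as long as the algorithm has not terminated, $B_{r+1}=B_r-\lambda^{(r)}\e_r\e_r^\ast$. Here $\e_r\in E_r^\perp$ is nonzero and $\lambda^{(r)}$ is the minimum computed in Steps 4--5. The first check is that the $\lambda^{(r)}$ produced by the algorithm coincides with the optimal value \eqref{eq:optimal_lambda}. This follows from Lemma \ref{lem:lambda}.(2): each $\lambda_i=1/\langle \Delta_i\uu_i,\uu_i\rangle$ equals $\max\{\mu>0\mid \mu\e_r\e_r^\ast\le B_r-A_i\}$. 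These maxima exist and are positive because $\e_r\in E_r^\perp=\bigcap_i\col(B_r-A_i)$ and $B_r-A_i\ge\zero$ (Lemma \ref{lem:lambda}.(1)). Consequently, whenever $B_r$ is an upper bound and the algorithm continues, the hypotheses of Proposition \ref{prop:dimension_decreases} hold with $B=B_r$ and $C=B_{r+1}$.

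Inductively, Proposition \ref{prop:dimension_decreases} then gives three facts at every step. First, $B_{r+1}$ is again an upper bound for $\{A_1,\ldots,A_k\}$. Second, $B_{r+1}\le B_r$, since $\lambda^{(r)}\e_r\e_r^\ast\ge\zero$; transitivity of the Loewner order then yields $B_{r}\le B$ for all $r$. Third, $E_r\subsetneq E_{r+1}$, which gives $\dim E_{r+1}\ge \dim E_r+1$. Since $\dim E_0\ge 0$, we get $\dim E_r\ge r$ for every $r$ the algorithm reaches.

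Termination follows by a counting argument. Every $E_r$ is a subspace of $\F^n$, so $\dim E_r\le n$, and the strict chain cannot extend past $r=n$. Concretely, if the algorithm reaches step $r=n$, then $\dim E_n\ge n$, so $E_n=\F^n$ and Step 2 stops. Hence the algorithm halts at some $m\le n$ with $E_m=\F^n$. By Proposition \ref{minimal condition}, the output $B_m$ is then a minimal upper bound, and $B_m\le B$ by the second fact above. The statement's ``at most $n$ steps'' should be read as at most $n$ rank-one subtractions; if $B$ is already minimal, then $m=0$.

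I expect the main subtlety to be the identification of the algorithm's $\lambda$ with the optimal $\lambda$ of Proposition \ref{prop:dimension_decreases}. That identification is exactly what guarantees strict growth of $E_r$, and it depends on Lemma \ref{lem:lambda}.(2) being independent of the choice of $\uu_i$ (for example, when the pseudoinverse is used). There is also a typographical indexing slip in the algorithm, $\sum_{i=0}^k$ instead of $\sum_{i=1}^k$, which I will read as $\sum_{i=1}^k$. Beyond that, the proof is pure bookkeeping on top of the earlier results.
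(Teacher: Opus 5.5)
Your proposal is correct and follows the paper's proof essentially verbatim: Proposition \ref{prop:dimension_decreases} shows that each $B_r$ remains an upper bound and that $E_0\subsetneq E_1\subsetneq\cdots$, so dimension counting forces $E_m=\F^n$ for some $m\le n$, and Proposition \ref{minimal condition} then certifies minimality. You also make explicit two points the paper leaves implicit: that the algorithm's $\lambda$ equals the optimal $\lambda$ in \eqref{eq:optimal_lambda} (by Lemma \ref{lem:lambda}.(2)), and that $B_m\le B$ follows by transitivity.
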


\begin{proof}
Algorithm \ref{main_algorithm} produces a successive sequence of Hermitian matrices $B_0\ge B_1\ge B_2\ge\cdots$ and associated subspaces $E_r = \sum_{i=0}^k \nullspace(B_r-A_i)$. Proposition \ref{prop:dimension_decreases} shows that each $B_r$ is still an upper bound for $\{A_1,\ldots A_k\}$, and that
\[
E_0\subsetneq E_1\subsetneq E_2\subsetneq\cdots
\]
is a strictly increasing chain of subspaces of $\F^n$. Since $\dim(\F^n)=n$, we must have $E_m=\F^n$ for some $m\le n$. By Proposition \ref{minimal condition},  $B_m$ is a minimal upper bound.
\end{proof}

\begin{remark}
At each step, $B_{r+1}=B_r-\lambda \e\e^\ast$ is obtained by subtracting a rank one matrix from $B_r$. It follows that $\dim(E_r)$ increases by exactly one at each step. Therefore, one can show that Algorithm \ref{main_algorithm} terminates in exactly $m$ steps, where $m=n-\dim(E_0)$. In particular, if each $\Delta_i=B_0-A_i$ is strictly positive definite, then $\dim(E_0)=0$ and the algorithm takes exactly $n$ steps. 
\end{remark}

We now provide examples showing how Algorithm \ref{main_algorithm}. We begin with an example using two $2 \times 2$ matrices.

\begin{example}
\label{2x2_example} Let
\[
B =
\begin{bmatrix}
8 & 2 \\
2 & 8
\end{bmatrix}, \quad
A_1 =
\begin{bmatrix}
3 & 1 \\
1 & 3
\end{bmatrix},\quad\text{and}\quad
A_2 =
\begin{bmatrix}
5 & 1 \\
1 & 1
\end{bmatrix},
\]
so that $B$ is an upper bound for $\{A_1,A_2\}$.

\medskip

In the first iteration of the algorithm, we find that
\[
\Delta_1 = 
\begin{bmatrix}
    5 & 1 \\
    1 & 5
\end{bmatrix} \quad\text{and}\quad
\Delta_2 =
\begin{bmatrix}
    3 & 1 \\
    1 & 7
\end{bmatrix}
\]
are both strictly positive definite. So, $E_0 =\nullspace(\Delta_1)+\nullspace(\Delta_2)=\{\zero\}$. We then choose a nonzero vector $\e\in E_0^\perp$ arbitrarily. Here we choose $\e=[1\; 1]^\top$. Then, we compute
\begin{align*}
\uu_1 =
\Delta_1^{-1}\e &= \frac{1}{6}\begin{bmatrix}
    1 \\ 1
\end{bmatrix} \quad&\text{so}\quad
\lambda_1 =
\frac{1}{\langle \e,\uu_1\rangle} &=
3, \\
\uu_2 =
\Delta_2^{-1}\e &= \frac{1}{10}\begin{bmatrix}
    3 \\ 1
\end{bmatrix} \quad&\text{so}\quad
\lambda_2 =
\frac{1}{\langle \e,\uu_2\rangle} &=
\frac{5}{2}.
\end{align*}
The optimal choice of scalar is $\lambda = \min\{3,\frac{5}{2}\}=\frac{5}{2}$. Therefore we obtain the new upper bound
\[
B_1 =
B-\lambda \e\e^\ast =
\frac{1}{2}\begin{bmatrix}
    \ph 11 & -1 \\
    -1 & \ph 11
\end{bmatrix}.
\]

\medskip

Now, the algorithm repeats with $B_1$, and we re-calculate
\[
\Delta_1 =
B_1-A_1 =
\frac{1}{2}\begin{bmatrix}
    \ph 5 & -3 \\
    -3 & \ph 5
\end{bmatrix}\quad\text{and}\quad
\Delta_2 =
B_1-A_2 =
\frac{1}{2}\begin{bmatrix}
    \ph 1 & -3 \\
    -3 & \ph 9
\end{bmatrix}.
\]
As promised, $\Delta_1,\Delta_2\ge \zero$, confirming that $B_1$ is in fact an upper bound for $\{A_1,A_2\}$. Whereas $\Delta_1$ is positive definite, $\Delta_2$ is now singular and
\[
E_1 =
\nullspace(\Delta_1)+\nullspace(\Delta_2) =
\nullspace(\Delta_2) =
\spn\left\{
\begin{bmatrix}
    3 \\ 1
\end{bmatrix}\right\}.
\]
As expected, $E_1\supsetneq E_0$, and now we choose a nonzero vector $\e=[-1\; 3]^\top\in E_1^\perp$. (Note that in this case, $E_1^\perp$ is one dimensional, and so the choice of $\e$ is unique up to a scalar multiple, which will not affect the resulting minimal upper bound $B_2$ computed at the next step.) Then we find
\begin{align*}
\uu_1 =
\Delta_1^{-1}\e &= \frac{1}{2}\begin{bmatrix}
    1 \\ 3
\end{bmatrix} \quad&\text{so}\quad
\lambda_1 =
\frac{1}{\langle \e,\uu_1\rangle} &=
\frac{1}{4}, \\
\uu_2 = 
\Delta_2^\dagger \e &= \frac{1}{5}\begin{bmatrix}
    -1 \\ \ph 3
\end{bmatrix} \quad&\text{so}\quad
\lambda_2 =
\frac{1}{\langle \e,\uu_2\rangle} &=
\frac{1}{2}.
\end{align*}
The optimal scalar is $\lambda = \min\{\frac{1}{4},\frac{1}{2}\}=\frac{1}{4}$.  (Note that while the choice of $\uu_2$ satisfying $\Delta_2 \uu_2=\e$ was not unique, any other choice such as $\uu_2' = [-2 \; 0]^\top$ will result in the same value for $\lambda_2$.)  Now, the next upper bound is
\[
B_2 =
B_1 -\lambda \e\e^\ast = 
\frac{1}{4}\begin{bmatrix}
    21 & 1 \\
    1 & 13
\end{bmatrix}.
\]

\medskip

Next the algorithm repeats with $B_2$. We obtain
\[
\Delta_1 =
B_2 -A_1 =\frac{1}{4}\begin{bmatrix}
    \ph 9 & -3 \\
    -3 & \ph 1
\end{bmatrix}\quad\text{and}\quad
\Delta_2 =
B_2 - A_2 =
\frac{1}{4}\begin{bmatrix}
    \ph 1 & -3 \\
    -3 & \ph 9
\end{bmatrix}.
\]
Now both $\Delta_1$ and $\Delta_2$ are singular, and we find
\[
E_2 =
\nullspace(\Delta_1)+\nullspace(\Delta_2) =
\spn\left\{\begin{bmatrix}
    1 \\ 3
\end{bmatrix}\right\} +
\spn\left\{\begin{bmatrix}
    3 \\ 1
\end{bmatrix}\right\} =
\F^2.
\]
Proposition \ref{minimal condition} now shows that $B_2$ is a minimal upper bound for $\{A_1,A_2\}$, and indeed the algorithm has converged in exactly $n=2$ steps.

The steps of the algorithm in this example are plotted using the associated ellipses in Figure \ref{fig:2x2_example_ellipses}. Here, the decreasing sequence of positive definite upper bounds corresponds to an increasing sequence of ellipses.

In the same example, if we instead make a different choice of the vector $\e$ at each step in the algorithm, then a different minimal upper bound is obtained. Using the same matrices $B,A_0,A_1$ as defined in Example \ref{2x2_example}, if we instead successively choose $\e=[1\; -1]^\top$ and $\e=[1 \; 2]^\top$ at each step, then we obtain the upper bounds
\[
B_1' = \frac{1}{3}\begin{bmatrix}
    19 & 11\\
    11 & 19
\end{bmatrix},\qquad
B_2'= 
\frac{1}{3}\begin{bmatrix}
    17 & 7\\
    7 & 11
\end{bmatrix}
\]
for $\{A_1,A_2\}$, with $B_2$ being  a minimal upper bound that is different from the one obtained in Example \ref{2x2_example}. Furthermore, if instead we take $\e=[-1\; 3]^\top$ at the first step, and $\e=[2\; 1]^\top$ at the second step, then we obtain the upper bounds
\[
B_1'' =
\frac{1}{7}\begin{bmatrix}
    53 & 23 \\
    23 & 29
\end{bmatrix},\qquad
B_2'' = B_2' =
\frac{1}{3}\begin{bmatrix}
    17 & 7 \\
    7 & 11
\end{bmatrix}.
\]
Interestingly, a different choice of the vector $\e$ in this case still produces the same minimal upper bound.
\end{example}

\begin{remark}
\label{rem:every_minimal_bound?}
While the previous example show that different choices of the vector $\e$ at each step can produce different minimal upper bounds, it is unclear if it is possible to obtain \emph{every} minimal upper bound for $\{A_1,\ldots,A_k\}$ less than $B$ by making a suitable choice of $\e$ at each step of the algorithm.
\end{remark}

\renewcommand{\commonscale}{0.75}
\newcommand{\commonxmin}{-1}
\newcommand{\commonxmax}{1}
\newcommand{\commonymin}{-1.2}
\newcommand{\commonymax}{1.2}
\begin{figure}[ht]
\centering

\begin{subfigure}[t]{0.48\textwidth}
\centering

\begin{tikzpicture}[scale = \commonscale]
\begin{axis}[xmin=\commonxmin,xmax=\commonxmax,ymin=\commonymin,ymax=\commonymax,xtick distance=0.5,xticklabel=\empty,ytick distance = 0.5,yticklabel=\empty]
\addplot[thick,blue,domain=0:2*pi,samples=50,smooth] ({ellipsex(A1_11,A1_12,A1_22,\x)},{ellipsey(A1_11,A1_12,A1_22,\x)});
\addplot[thick,red,domain=0:2*pi,samples=50,smooth] ({ellipsex(A2_11,A2_12,A2_22,\x)},{ellipsey(A2_11,A2_12,A2_22,\x)});
\addplot[thick,purple,domain=0:2*pi,samples=50,smooth,fill=purple,fill opacity = 0.05] ({ellipsex(B0_11,B0_12,B0_22,\x)},{ellipsey(B0_11,B0_12,B0_22,\x)});
\end{axis}
\end{tikzpicture}

\end{subfigure}

\begin{subfigure}[t]{0.48\textwidth}
\centering

\begin{tikzpicture}[scale = \commonscale]
\begin{axis}[xmin=\commonxmin,xmax=\commonxmax,ymin=\commonymin,ymax=\commonymax,xtick distance=0.5,xticklabel=\empty,ytick distance = 0.5,yticklabel=\empty]
\addplot[thick,blue,domain=0:2*pi,samples=50,smooth] ({ellipsex(A1_11,A1_12,A1_22,\x)},{ellipsey(A1_11,A1_12,A1_22,\x)});
\addplot[thick,red,domain=0:2*pi,samples=50,smooth] ({ellipsex(A2_11,A2_12,A2_22,\x)},{ellipsey(A2_11,A2_12,A2_22,\x)});
\addplot[thick,purple,domain=0:2*pi,samples=50,smooth,fill=purple,fill opacity = 0.05] ({ellipsex(B1_11,B1_12,B1_22,\x)},{ellipsey(B1_11,B1_12,B1_22,\x)});
\end{axis}
\end{tikzpicture}

\end{subfigure} \hfill
\begin{subfigure}[t]{0.48\textwidth}
\centering

\begin{tikzpicture}[scale = \commonscale]
\begin{axis}[xmin=\commonxmin,xmax=\commonxmax,ymin=\commonymin,ymax=\commonymax,xtick distance=0.5,xticklabel=\empty,ytick distance = 0.5,yticklabel=\empty]
\addplot[thick,blue,domain=0:2*pi,samples=50,smooth] ({ellipsex(A1_11,A1_12,A1_22,\x)},{ellipsey(A1_11,A1_12,A1_22,\x)});
\addplot[thick,red,domain=0:2*pi,samples=50,smooth] ({ellipsex(A2_11,A2_12,A2_22,\x)},{ellipsey(A2_11,A2_12,A2_22,\x)});
\addplot[thick,purple,domain=0:2*pi,samples=50,smooth,fill=purple,fill opacity = 0.05] ({ellipsex(B1alt_11,B1alt_12,B1alt_22,\x)},{ellipsey(B1alt_11,B1alt_12,B1alt_22,\x)});
\end{axis}
\end{tikzpicture}
\end{subfigure} 

\begin{subfigure}[t]{0.48\textwidth}
\centering

\begin{tikzpicture}[scale = \commonscale]
\begin{axis}[xmin=\commonxmin,xmax=\commonxmax,ymin=\commonymin,ymax=\commonymax,xtick distance=0.5,xticklabel=\empty,ytick distance = 0.5,yticklabel=\empty]
\addplot[thick,blue,domain=0:2*pi,samples=50,smooth] ({ellipsex(A1_11,A1_12,A1_22,\x)},{ellipsey(A1_11,A1_12,A1_22,\x)});
\addplot[thick,red,domain=0:2*pi,samples=50,smooth] ({ellipsex(A2_11,A2_12,A2_22,\x)},{ellipsey(A2_11,A2_12,A2_22,\x)});
\addplot[thick,purple,domain=0:2*pi,samples=50,smooth,fill=purple,fill opacity = 0.05] ({ellipsex(B2_11,B2_12,B2_22,\x)},{ellipsey(B2_11,B2_12,B2_22,\x)});
\end{axis}
\end{tikzpicture}

\end{subfigure} \hfill
\begin{subfigure}[t]{0.48\textwidth}
\centering

\begin{tikzpicture}[scale = \commonscale]
\begin{axis}[xmin=\commonxmin,xmax=\commonxmax,ymin=\commonymin,ymax=\commonymax,xtick distance=0.5,xticklabel=\empty,ytick distance = 0.5,yticklabel=\empty]
\addplot[thick,blue,domain=0:2*pi,samples=50,smooth] ({ellipsex(A1_11,A1_12,A1_22,\x)},{ellipsey(A1_11,A1_12,A1_22,\x)});
\addplot[thick,red,domain=0:2*pi,samples=50,smooth] ({ellipsex(A2_11,A2_12,A2_22,\x)},{ellipsey(A2_11,A2_12,A2_22,\x)});
\addplot[thick,purple,domain=0:2*pi,samples=50,smooth,fill=purple,fill opacity = 0.05] ({ellipsex(B2alt_11,B2alt_12,B2alt_22,\x)},{ellipsey(B2alt_11,B2alt_12,B2alt_22,\x)});
\end{axis}
\end{tikzpicture}
\end{subfigure} 

\caption{Top-center and left panels: Plots of the ellipses $E_{A_1}$ (red vertically oriented ellipse), $E_{A_2}$ (blue horizontally oriented ellipse), and $E_{B_i}$ (inner shaded ellipse) for $i=0,1,2$ from Example \ref{2x2_example}. Since $B_0\ge B_1\ge B_2$, the ellipse $E_{B_i}$ becomes larger at each step. Because $B_2$ is a minimal upper bound for $A_1,A_2$, there is no larger ellipse than $E_{B_2}$ which is still contained in $E_{A_1}\cap E_{A_2}$. \\\\
Right panels: Plots of the ellipses associated to a different choice of the vector $\e$ at each step, from Example \ref{2x2_example}, which results in different upper bounds $B_1'$ and $B_2'$, with $B_2'$ minimal. Because the resulting minimal upper bound is incomparable to the previous, neither of the maximal ellipses in the bottom panels is contained in the other.\\\\
Note: All ellipses are centered at 0 and axis ticks are spaced 0.5 apart.}
\label{fig:2x2_example_ellipses}
\end{figure}

The next example provides an illustration of the algorithm with $3 \times 3$ matrices.

\begin{example}\label{3x3_example}
Consider the $3\times 3$ Hermitian matrices
\[
A_1 = 
\begin{bmatrix}
    \ph 2 & -1 & 0 \\
    -1 & \ph 2 & 0 \\
    \ph 0 & \ph 0 & 2
\end{bmatrix},\quad
A_2 = 
\begin{bmatrix}
    \ph 2 & -1 & 3 \\
    -1 &  \ph 2 & 0 \\
    \ph 3 & \ph 0 & 2
\end{bmatrix}
\]
and the initial upper bound
\[
B_0 =
\begin{bmatrix}
    \ph 4 & -1 & 3/2 \\
    -1 & \ph 3 & 0 \\
    \ph 3/2 & \ph 0 & 4
\end{bmatrix}.
\]
Algorithm \ref{main_algorithm} produces a minimal upper bound for $\{A_1,A_2\}$ in three steps. Successively using the vectors
\begin{align*}
\e_1 &=
[1 \; 1 \; 1]^\top,\\
\e_2 &=
[-5 \;4 \; 3]^\top, \\
\e_3 &=
[-9 \; -16\; 1]^\top
\end{align*}
in Step 3 of Algorithm \ref{main_algorithm} produces the minimal upper bound
\[
B_3 =
\frac{1}{2}
\begin{bmatrix}
    \ph 7 & -2 & 3 \\
    -2 &  \ph 4 & 0 \\
    \ph 3 & \ph 0 & 7
\end{bmatrix}
\]
for $\{A_1,A_2\}$.
\end{example}

\FloatBarrier

\section{Conclusion} \label{sec:conclusion}

In this paper, we studied the problem of computing a  minimal upper bound for a non-empty finite set of Hermitian matrices under the Loewner order. The main contribution of this paper is the development of an
an iterative method (Algorithm \ref{main_algorithm}) that exactly computes a minimal upper bound for any finite set of $n \times n$ Hermitian
matrices $\{A_1, \dots, A_k\}$ starting from any given upper bound $B_0$. It is proved that the algorithm
terminates in at most $n$ steps (Theorem \ref{thm:algorithm_terminates}), and more
precisely in exactly $m = n - \dim(E_0)$ steps, where $E_0$ denotes the
sum of the null spaces of the initial differences $B_0 - A_i, i \in \{1, \dots, k\}$. We also gave a self-contained proof of Stott's algebraic characterization of
minimality in the Loewner order, which
serves as the termination criterion
of the algorithm. The algorithm is illustrated on  examples, and a  Python implementation is available on GitHub (\href{https://github.com/patrickp78-byte/Minimal-Upper-Bound-Algorithm-in-Loewner-Ordering}{GitHub Repository}).

Many future research directions  have emerged from this work. It remains unknown whether every minimal upper bound for a given non-empty finite set of Hermitian matrices can be obtained by some sequence of direction vector choices $\mathbf{e}$ in
Algorithm \ref{main_algorithm}. Another direction to explore is the usefulness of the algorithm in semidefinite programming \cite{vandenberghe1996semidefinite}. It is unclear  whether minimizing a convex objective, such as the
trace or the log-determinant, over the set of upper bounds produces a
minimal upper bound in the Loewner sense, or yields a distinguished
minimal upper bound with additional optimality properties.  Semidefinite programming duality theory may also provide new certificates of minimality. The algorithm developed in this paper might also be valuable to generate a high-quality feasible starting matrix before using a semidefinite programming solver. Numerical experiments may be able to identify  situations leading to significant performance gain.

The theoretical results in this paper also lead naturally to some infinitary questions. For example, can the algorithm be modified to converge to a minimal upper bound for an infinite set of Hermitian matrices? For example, it is possible to prove the existence of minimal upper bounds for any compact subset of $M_n(\F)^\sa$. Another natural direction is to investigate minimality conditions such as Theorem \ref{minimal condition} in the setting of bounded operators on an infinite dimensional Hilbert space.

The connection
to the \emph{Loewner-John ellipsoid} \cite{henk2012lowner} might also be worth exploring. While Algorithm~\ref{main_algorithm} produces
an enclosed ellipsoid in at most $n$ steps, it remains open whether a
judicious choice of vector $\mathbf{e}$ at each iteration can recover the
maximum-volume enclosed ellipsoid (the inner Loewner-John ellipsoid) exactly.  Exploring
applications in control theory and quantum information where the Loewner
order governs comparisons of Lyapunov functions and density matrices, are other possible avenues.  Finally, a rigorous  numerical analysis of the algorithm is an obvious future research direction.

\medskip

\small \noindent \textbf{Acknowledgments.} 
The third and fourth authors would like to give a special thanks and gratitude for our supervisors, the first and second authors, for giving us the opportunity to work on this project. In addition, we also thank them for their patience, kindness, mentorship, and funding of this research to obtain the results written in this paper.

\bibliographystyle{plain}
\bibliography{sources}

@book{horn2012matrix,
  title={Matrix analysis},
  author={Horn, R.  and Johnson, C.},
  year={2012},
  publisher={Cambridge university press}
}

@phdthesis{stott2017minimal,
  title={Minimal upper bounds in the {L}{\"o}wner order and application to invariant computation for switched systems},
  author={Stott, N.},
  year={2017},
  school={Universit{\'e} Paris Saclay (COmUE)}
}

@article{kadison1951,
  title={Order properties of bounded self-adjoint operators},
  author={Kadison, R.},
  journal={Proceedings of the American Mathematical Society},
  volume={2},
  number={3},
  pages={505--510},
  year={1951},
  publisher={JSTOR}
}

@book{bhatia2013matrix,
  title={Matrix analysis},
  author={Bhatia, R.},
  volume={169},
  year={2013},
  publisher={Springer Science \& Business Media}
}

@book{bhimasankaram2010matrix,
  title={Matrix partial orders, shorted operators and applications},
  author={Bhimasankaram, P. and Malik, S. and Mitra, S.},
  volume={10},
  year={2010},
  publisher={World Scientific}
}

@article{Loewner1934,
  author  = {Loewner, C.},
  title   = {{\"U}ber monotone Matrixfunktionen},
  journal = {Mathematische Zeitschrift},
  volume  = {38},
  number  = {1},
  pages   = {177--216},
  year    = {1934},
  doi     = {10.1007/BF01170633}
}

@article{vandenberghe1996semidefinite,
  title={Semidefinite programming},
  author={Vandenberghe, L. and Boyd, S.},
  journal={SIAM review},
  volume={38},
  number={1},
  pages={49--95},
  year={1996},
  publisher={SIAM}
}

@book{GolubVanLoan2013,
  author    = {Golub, G. and Van Loan, C.},
  title     = {Matrix Computations},
  edition   = {4th},
  publisher = {Johns Hopkins University Press},
  address   = {Baltimore, MD},
  year      = {2013}
}

@article{HesteneStiefel1952,
  author  = {Hestenes, M. and Stiefel, E.},
  title   = {Methods of Conjugate Gradients for Solving Linear Systems},
  journal = {Journal of Research of the National Bureau of Standards},
  volume  = {49},
  number  = {6},
  pages   = {409--435},
  year    = {1952},
  doi     = {10.6028/jres.049.044}
}

@article{AshbyManteuffelSaylor1989,
  author  = {Ashby, S. and Manteuffel, T. and Saylor, P.},
  title   = {Adaptive Polynomial Preconditioning for {H}ermitian
             Indefinite Linear Systems},
  journal = {BIT Numerical Mathematics},
  volume  = {29},
  number  = {4},
  pages   = {583--609},
  year    = {1989},
  doi     = {10.1007/BF01932736}
}

@book{Anderson2003,
  author    = {Anderson, T.},
  title     = {An Introduction to Multivariate Statistical Analysis},
  edition   = {3rd},
  publisher = {Wiley-Interscience},
  address   = {Hoboken, NJ},
  year      = {2003}
}

@incollection{henk2012lowner,
  title={{L}{\"o}wner--{J}ohn ellipsoids},
  author={Henk, M.},
  booktitle={Optimization Stories},
  pages={95--106},
  year={2012},
  publisher={European Mathematical Society-EMS-Publishing House GmbH}
}

@article{BAKSALARY1989,
title = {Some properties of matrix partial orderings},
journal = {Linear Algebra and its Applications},
volume = {119},
pages = {57-85},
year = {1989},
issn = {0024-3795},
doi = {https://doi.org/10.1016/0024-3795(89)90069-4},
url = {https://www.sciencedirect.com/science/article/pii/0024379589900694},
author = {Baksalary, J. and Pukelsheim, F. and Styan, G.}
}

@article{BAKSALARY1990,
title = {Characterizations of the best linear unbiased estimator in the general {G}auss-{M}arkov model with the use of matrix partial orderings},
journal = {Linear Algebra and its Applications},
volume = {127},
pages = {363-370},
year = {1990},
issn = {0024-3795},
doi = {https://doi.org/10.1016/0024-3795(90)90349-H},
url = {https://www.sciencedirect.com/science/article/pii/002437959090349H},
author = {Baksalary, J. and Puntanen, S.}
}

@article{BAKSALARY1990171,
title = {Löwner-ordering antitonicity of generalized inverse of Hermitian matrices},
journal = {Linear Algebra and its Applications},
volume = {127},
pages = {171-182},
year = {1990},
issn = {0024-3795},
doi = {https://doi.org/10.1016/0024-3795(90)90342-A},
url = {https://www.sciencedirect.com/science/article/pii/002437959090342A},
author = {Baksalary, J. and Nordström, K. and Styan, G.}
}

@article{PAN1993103,
title = {A perturbation analysis of the problem of downdating a {C}holesky factorization},
journal = {Linear Algebra and its Applications},
volume = {183},
pages = {103-115},
year = {1993},
issn = {0024-3795},
doi = {https://doi.org/10.1016/0024-3795(93)90426-O},
url = {https://www.sciencedirect.com/science/article/pii/002437959390426O},
author = {Pan, C.}
}

@article{stott2016maximal,
  author  = {Stott, N.},
  title   = {Maximal lower bounds in the {L\"o}wner order},
  journal = {Proceedings of the American Mathematical Society},
  volume  = {146},
  year    = {2018},
  doi     = {10.1090/proc/13785},
}

@book{simon2019loewner,
  title={Loewner's theorem on monotone matrix functions},
  author={Simon, B.},
  volume={10},
  year={2019},
  publisher={Springer}
}

@book{boyd2004convex,
  title={Convex optimization},
  author={Boyd, S. and Vandenberghe, L.},
  year={2004},
  publisher={Cambridge university press}
}

@article{sherman1951order,
  title={Order in operator algebras},
  author={Sherman, S.},
  journal={American Journal of Mathematics},
  volume={73},
  number={1},
  pages={227--232},
  year={1951},
  publisher={JSTOR}
}
\end{document}